\newtheorem{theorem}{Theorem}[section] 
\newtheorem{lemma}[theorem]{Lemma}
\newtheorem{proposition}[theorem]{Proposition}
\theoremstyle{definition}
\newtheorem{definition}[theorem]{Definition}
\theoremstyle{remark}
\newtheorem*{remark}{Remark} 
\newcommand{\seq}[1]{\langle #1\rangle} 
\newcommand{\abs}[1]{\lvert #1\rvert} 
\newcommand{\concat}{\!\cdot\!}       
\newcommand{\Kol}{K(1,3)}       
\newcommand{\G}{\mathcal{G}}       
\newcommand{\RunVec}[1]{R(#1)}     
\newcommand{\OEIS}[1]{\href{https://oeis.org/#1}{\textsc{#1}}} 
\newcommand{\N}{\mathbb{N}}       
\newcommand{\Nx}[2]{N_{#1}(#2)}     
\title{A Recursive Block--Pillar Structure in the Kolakoski Sequence $\Kol$}
\author{%
   \textsc{William Cook} \\ 
}
\date{\today} 
\begin{document}
\maketitle

\begin{abstract}
 The Kolakoski sequence $K(a,b)$ over $\{a, b\}$ is the unique sequence starting with $a$ that equals its own run-length encoding. While the classical case $K(1,2)$ (\OEIS{A000002}~\cite{OEIS_A000002}) remains deeply enigmatic~\cite{Dekking2017}, generalisations $K(a,b)$ exhibit markedly different behaviours depending on the parity of $a$ and $b$. The sequence $\Kol$ (\OEIS{A064353}~\cite{OEIS_A064353}), a 'same-parity' case using the alphabet $\{1,3\}$, is known to possess significant structure; notably, a related sequence is morphic~\cite{BaakeSing2004}, leading to a calculable symbol density distinct from $1/2$. The frequency of '1' in $K(1,3)$ is known to be $d \approx 0.397215$~\cite[p.~171, Eq.~(6)]{BaakeSing2004}. This paper reveals a complementary structural property: $\Kol$ admits an explicit nested block--pillar recursion. We introduce \emph{block sequences}~$B_n$ and \emph{pillar sequences}~$P_n$ satisfying
 \[
    B_{n+1}=B_n\concat P_n\concat B_n, \quad
    P_{n+1}= \G(\RunVec{P_n},3),
 \]
 where $\G$ is the run-length generation operator and $\RunVec{P_n}$ denotes the sequence $P_n$ interpreted as a run-length vector. We prove that every $B_n$ is a prefix of $\Kol$ and that $B_{n+1}= \G(\RunVec{B_n},1)$, demonstrating how the recursion reflects the Kolakoski property. This provides a direct, constructive recursive definition of $\Kol$. Furthermore, we derive exact recurrences for prefix lengths and symbol counts, prove exponential growth governed by the Pisot number $\alpha$ (the real root of $x^3-2x^2-1=0$~\cite[p.~170]{BaakeSing2004}), and establish a recurrence for the symbol density. We show that if the block and pillar densities converge, they must converge to the same value $d$. The quantitative analysis aligns perfectly with the known properties of $K(1,3)$ derived from substitution dynamics~\cite{BaakeSing2004}. This highlights the structural regularity expected of same-parity Kolakoski sequences and offers an alternative, constructive perspective to its known morphic generation.
\end{abstract}


\section{Introduction}

For distinct symbols $a,b\in\N$, the \emph{Kolakoski sequence} $K(a,b)$ is the unique infinite sequence over the alphabet $\{a,b\}$ beginning with~$a$ such that the sequence equals its own run-length encoding~\cite{Kolakoski1965,Oldenburger1939}. That is, if $R(S)$ denotes the sequence of run lengths of a sequence $S$, then $K(a,b)$ equals the sequence generated by applying the run lengths $R(S)$ while alternating between symbols $a$ and $b$, starting with $a$.

The archetypal case $K(1,2)$ (\OEIS{A000002}~\cite{OEIS_A000002}),
\[
 K(1,2) = 1\,2\,2\,1\,1\,2\,1\,2\,2\,1\,2\,2\,\dots,
\]
has resisted decades of analysis despite its simple definition \cite{Dekking2017}. Fundamental questions, such as whether the asymptotic frequency of~$1$s exists and equals $1/2$~\cite{Keane1991}, remain open.

However, the broader family of $K(a,b)$ sequences exhibits a crucial dichotomy based on the parity of $a$ and $b$. Sequences where $a$ and $b$ have different parity (like $K(1,2)$) tend to be complex and poorly understood. In contrast, sequences where $a$ and $b$ share the same parity (both odd or both even) are often more structured and analytically tractable~\cite{BaakeSing2004}.

This article focuses on $K(1,3)$ (\OEIS{A064353}~\cite{OEIS_A064353}), a key example of the 'same-parity' (odd-odd) case:
\[
  \Kol = 1\,3\,3\,3\,1\,1\,1\,3\,3\,3\,1\,3\,\dots.
\]
Reflecting the expected regularity of same-parity sequences, $\Kol$ is known to possess significant structure. It is closely related to the sequence $\operatorname{Kol}(3,1)$ (its mirror image starting with 3), which is known to be generatable by a primitive Pisot substitution rule and classifiable as a (deformed) model set with pure point diffraction spectrum \cite{BaakeSing2004}. These properties imply unique symbol frequencies exist. The frequency of '1' in $\Kol$ is known to be $d = (3-\alpha)/2 \approx 0.397215$, where $\alpha \approx 2.20557$ is the real root of the polynomial $x^3 - 2x^2 - 1 = 0$ \cite[p.~171, Eq.~(6)]{BaakeSing2004}. This non-trivial frequency confirms $\Kol$ is structurally different from a random sequence. (Note that sequences like $K(2,4)$ are trivial rescalings of $K(1,2)$ and thus inherit its complexity, e.g., \cite[p.~1]{Dekking2017}). 

In this paper, we reveal a different, yet comparably structured, property of $\Kol$. We demonstrate that $\Kol$ admits an explicit \emph{block--pillar} recursion, allowing arbitrarily long prefixes to be generated via a direct, nested construction. We define sequences (blocks $B_n$ and pillars $P_n$) and show they satisfy a simple mutual recursion that is consistent with the Kolakoski property itself, applied alternately to the blocks and pillars using a generation operator $\G$. This recursive structure not only provides a generative mechanism but also allows for a detailed quantitative analysis, including deriving relations for the asymptotic symbol frequency and growth rate. This provides a constructive, recursive description of $\Kol$ itself, complementing the known morphic generation and further highlighting the profound structural differences between $\Kol$ and the classical $K(1,2)$.

The structure of the paper is as follows: \Cref{sec:definitions} introduces the necessary notation and the recursive definitions. \Cref{sec:prelim_props} establishes basic properties of the blocks and pillars. \Cref{sec:main_theorem} presents the main theorem proving the connection to $\Kol$ and the Kolakoski property. \Cref{sec:quant_analysis} performs a detailed quantitative analysis of lengths, counts, density, and growth rate. \Cref{sec:discussion} discusses the implications of these findings, connects them to known results, and suggests future directions.

\section{Definitions}
\label{sec:definitions}

Throughout, concatenation of finite sequences is written~$\concat$ and indexing starts at~$1$. The alphabet considered for $\Kol$ is $\Sigma = \{1, 3\}$. We denote the number of occurrences of symbol $x$ in a sequence $W$ by $\Nx{x}{W}$. We use angle brackets $\seq{a_1, \dots, a_n}$ for sequences viewed as vectors, particularly run-length vectors, and standard sequence notation $S[1..n]$ for prefixes. We use $\RunVec{W}$ to denote the sequence $W$ interpreted as a run-length vector.

\begin{definition}[Generation operator] \label{def:gen_op}
  For a finite \emph{run‑length vector} $R=\seq{r_1,\dots,r_m}$ of positive integers and a \emph{starting symbol} $s\in\{1,3\}$, define the \emph{generation operator} $\G$ as:
  \[
     \G(R,s)= s^{r_1}\concat (4{-}s)^{r_2}\concat s^{r_3}\concat\cdots,
  \]
  where $x^{k}$ denotes $k$ consecutive copies of the symbol~$x$, and $(4-s)$ provides the alternate symbol within the alphabet $\{1,3\}$. The parity of the run index determines the symbol used, starting with $s$ for the first run. The length of the generated sequence is $\abs{\G(R,s)} = \sum_{i=1}^m r_i$.
\end{definition}

\begin{definition}[Blocks and pillars]\label{def:B-P}
  Let $R_0 = \RunVec{\Kol[1..5]} = \seq{1,3,3,3,1}$ be the run-length vector corresponding to the first 5 terms of $\Kol$ (\OEIS{A064353}~\cite{OEIS_A064353}).
  Define the initial block sequence $B_1 = \G(R_0, 1)$ and the initial pillar sequence $P_1 = \seq{3}$:
  \begin{align*}
     B_1 &= \G\!\bigl(\seq{1,3,3,3,1},1\bigr)=
     1\,3\,3\,3\,1\,1\,1\,3\,3\,3\,1 && (\text{length } 11), \\
     P_1 &= \seq{3} && (\text{length } 1).
  \end{align*}
  For $n\ge 1$, define the sequences $B_{n+1}$ and $P_{n+1}$ recursively:
  \begin{align*}
     B_{n+1} &= B_n\concat P_n\concat B_n, \\
     P_{n+1} &= \G(\RunVec{P_n},3).
  \end{align*}
  Here $P_n$ is the pillar sequence, and $\RunVec{P_n}$ denotes the sequence $P_n$ interpreted as a run-length vector for input into the operator $\G$.
\end{definition}

The first few blocks are prefixes of $\Kol$:
$B_1 = \Kol[1..11]$,
$B_2 = B_1 \concat P_1 \concat B_1 = B_1 \concat \seq{3} \concat B_1 = \Kol[1..23]$,
$P_2 = \G(\RunVec{P_1},3) = \G(\seq{3},3) = 3^3 = \seq{3,3,3}$.
$B_3 = B_2 \concat P_2 \concat B_2 = B_2 \concat \seq{3,3,3} \concat B_2 = \Kol[1..49]$.
The lengths satisfy $\abs{B_{n+1}} = 2\abs{B_n} + \abs{P_n}$.

\section{Preliminary Properties}
\label{sec:prelim_props}

\begin{lemma}\label{lem:basic-properties}
For every $n\ge 1$:
\begin{enumerate}[label=\textup{(\roman*)}]
  \item $\abs{P_n}$ is odd;
  \item the last symbol of $B_n$ is~$1$;
  \item the last symbol of $P_n$ is~$3$.
\end{enumerate}
\end{lemma}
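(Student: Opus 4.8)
The plan is to prove all three statements simultaneously by induction on $n$, since they are clearly interdependent: the last symbol of $B_{n+1}$ depends on the last symbol of $B_n$ (via the recursion $B_{n+1}=B_n\concat P_n\concat B_n$), and the parity of $\abs{P_{n+1}}$ and the last symbol of $P_{n+1}$ depend on the structure of $P_n$ through the generation operator $\G(\RunVec{P_n},3)$. So I would set up a single induction with hypothesis $H(n)$: ``$\abs{P_n}$ is odd, the last symbol of $B_n$ is $1$, and the last symbol of $P_n$ is $3$.''

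First I would verify the base case $n=1$ directly from \Cref{def:B-P}: $\abs{P_1}=\abs{\seq{3}}=1$ is odd, $B_1=1\,3\,3\,3\,1\,1\,1\,3\,3\,3\,1$ ends in $1$, and $P_1=\seq{3}$ ends in $3$. For the inductive step, assume $H(n)$. Statement (ii) for $n+1$ is immediate: $B_{n+1}$ ends with a copy of $B_n$, which ends in $1$ by the inductive hypothesis. The substantive work is in (i) and (iii), which concern $P_{n+1}=\G(\RunVec{P_n},3)$. Here I would observe that $P_n$, viewed as a run-length vector $\seq{r_1,\dots,r_m}$, has entries all equal to $1$ or $3$ (this should itself be tracked as part of the induction, or follow from the structure of $\G$ whose outputs over $\{1,3\}$ have run-lengths in $\{1,3\}$ — I would add this to the inductive hypothesis if needed). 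Then $\abs{P_{n+1}}=\sum_{i=1}^m r_i$, and since each $r_i$ is odd, this sum has the same parity as $m=\abs{P_n}$, which is odd by hypothesis; hence $\abs{P_{n+1}}$ is odd, giving (i).

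For (iii), the last symbol of $P_{n+1}=\G(\RunVec{P_n},3)$ is the symbol used in the final run, namely run index $m=\abs{P_n}$. By \Cref{def:gen_op}, the symbol alternates starting with $s=3$ for run $1$, so run $i$ uses symbol $3$ when $i$ is odd and $1$ when $i$ is even. Since $m=\abs{P_n}$ is odd by the inductive hypothesis (part (i) at level $n$), the last run uses symbol $3$, giving (iii). The main obstacle — really the only point requiring care — is making sure the induction is correctly ``front-loaded'': part (iii) at level $n+1$ relies on part (i) at level $n$, and part (i) at level $n+1$ relies on the run-lengths of $P_n$ lying in an odd set (most naturally $\{1,3\}$), which is why I would either strengthen the hypothesis to include ``all entries of $\RunVec{P_n}$ are odd'' or note that this is automatic because $P_n$ for $n\ge 2$ is an output of $\G$ and $\G$ over $\{1,3\}$ only produces runs of length $1$ or $3$ when fed a vector with entries in $\{1,3\}$. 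With the hypothesis appropriately stated, each implication is a one-line parity/alternation observation and the proof closes cleanly.
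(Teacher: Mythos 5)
Your proposal is correct and follows essentially the same route as the paper's proof: a simultaneous induction where (ii) is immediate from the recursion, (iii) follows from the odd run count $m=\abs{P_n}$ forcing the last run to reuse the starting symbol $3$, and (i) follows from the entries of $P_n$ all being odd (the paper writes this as $\abs{P_{k+1}}=\abs{P_k}+2\,\Nx{3}{P_k}$, which is the same parity argument). Your worry about needing to strengthen the hypothesis with ``all entries of $\RunVec{P_n}$ lie in $\{1,3\}$'' is unnecessary, since $P_n$ is by construction a sequence over the alphabet $\{1,3\}$ ($\G$ only ever emits the symbols $s$ and $4-s$), which is exactly the observation the paper relies on implicitly.
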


\begin{proof}
We argue by induction. The base case $n=1$ is immediate from \Cref{def:B-P}: $\abs{P_1}=1$ (odd), last symbol of $B_1$ is $1$, last symbol of $P_1$ is $3$. For the inductive step, assume the statements hold for some $n=k \ge 1$.

\smallskip
\noindent\textit{Proof of (ii) for $n=k+1$.} From $B_{k+1}=B_k\concat P_k\concat B_k$, the last symbol is that of $B_k$, which is $1$ by the induction hypothesis (ii) for $n=k$.

\smallskip
\noindent\textit{Proof of (iii) for $n=k+1$.} The generation is $P_{k+1}=\G(\RunVec{P_k},3)$. The run-length vector is $\RunVec{P_k}$, which has length $m = \abs{P_k}$. By hypothesis (i) for $n=k$, $m$ is odd. The generation starts with symbol $3$ and alternates $3, 1, 3, \dots$. Since there are an odd number of runs ($m$ runs), the $m$-th (last) run uses the same symbol as the first, which is $3$. Therefore, the sequence $P_{k+1}$ ends with the symbol $3$.

\smallskip
\noindent\textit{Proof of (i) for $n=k+1$.} Write $P_k = \seq{p_1, \dots, p_{m}}$ where $m=\abs{P_k}$. The sequence $P_k$ contains only symbols $1$ and $3$. When $P_k$ is interpreted as a run-length vector $\RunVec{P_k}$ to generate $P_{k+1} = \G(\RunVec{P_k}, 3)$, the length of $P_{k+1}$ is the sum of the run lengths specified by $\RunVec{P_k}$:
\[
  \abs{P_{k+1}} = \sum_{i=1}^{m} (\RunVec{P_k})_i = \sum_{i=1}^{m} p_i = \Nx{1}{P_k} \cdot 1 + \Nx{3}{P_k} \cdot 3.
\]
We can rewrite this as
\[
  \abs{P_{k+1}} = (\Nx{1}{P_k} + \Nx{3}{P_k}) + 2 \Nx{3}{P_k} = \abs{P_k} + 2\,\Nx{3}{P_k}.
\]
Since $\abs{P_k}$ is odd by the induction hypothesis (i) for $n=k$, and $2\Nx{3}{P_k}$ is clearly even, the sum $\abs{P_{k+1}}$ must be odd.
\end{proof}

\begin{remark}[Consistency Check]
The property that $B_n$ ends in '1' (\Cref{lem:basic-properties}(ii)) can also be verified using \Cref{thm:main}(ii) (proven below). Since $L_{n+1} = 2L_n + m_n$ and $m_n$ is odd (\Cref{lem:basic-properties}(i)), the length $L_n = |B_n|$ is always odd for $n \ge 1$ (as $L_1=11$). The generation $B_{n+1} = \G(\RunVec{B_n}, 1)$ involves $L_n$ runs. As $L_n$ is odd, the final ($L_n$-th) run uses the starting symbol '1'. The length of this final run is $(B_n)_{L_n}$, the last symbol of $B_n$. If we assume inductively that $(B_n)_{L_n}=1$, then the sequence $B_{n+1}$ ends with $1^1=1$. This provides a self-consistent check reinforcing the lemma.
\end{remark}

\section{Main Theorem: Connection to Kol}
\label{sec:main_theorem}

\begin{theorem}\label{thm:main}
  For all $n\ge 1$:
  \begin{enumerate}[label=\textup{(\roman*)}]
    \item $B_n$ is the prefix of $\Kol$ of length $\abs{B_n}$;
    \item $B_{n+1}= \G(\RunVec{B_n},1)$, where $\RunVec{B_n}$ denotes the sequence $B_n$ interpreted as a run-length vector.
  \end{enumerate}
  Consequently, $\Kol = \displaystyle\lim_{n\to\infty} B_n$.
\end{theorem}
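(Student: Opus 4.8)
The plan is to prove part~(ii) first as a self-contained identity by induction on~$n$, then deduce part~(i) from~(ii) together with the defining Kolakoski self-reference $\Kol = \G(\RunVec{\Kol},1)$, and finally read off the limit statement. Two elementary preliminary facts are needed. First, $L_n := \abs{B_n}$ is odd for every $n\ge 1$: this holds for $L_1 = 11$, and $L_{n+1} = 2L_n + \abs{P_n}$ with $\abs{P_n}$ odd by \Cref{lem:basic-properties}(i), so oddness propagates. Second, $\G$ is almost a homomorphism for concatenation of run-length vectors: $\G(R\concat R',s) = \G(R,s)\concat\G(R',\sigma)$, where $\sigma = s$ if $\abs{R}$ is even and $\sigma = 4-s$ if $\abs{R}$ is odd, which is immediate from \Cref{def:gen_op} since the symbol emitted in the $j$-th run depends only on the parity of~$j$. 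Here it is important that $\RunVec{W}$ is the \emph{literal} reinterpretation of $W$ as an integer vector, so concatenating words simply concatenates the corresponding $\RunVec{}$-vectors, with no merging of runs at the seam.

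For part~(ii), the base case $B_2 = \G(\RunVec{B_1},1)$ is a finite check: reading $\RunVec{B_1} = \seq{1,3,3,3,1,1,1,3,3,3,1}$ while alternating $1,3,1,\dots$ yields $B_1\concat\seq{3}\concat B_1 = B_2$. For the inductive step, assume $\G(\RunVec{B_n},1) = B_{n+1}$. Since $\RunVec{B_{n+1}} = \RunVec{B_n}\concat\RunVec{P_n}\concat\RunVec{B_n}$, applying the homomorphism fact twice — the symbol flips from~$1$ to~$3$ after the first factor because $L_n$ is odd, and flips back to~$1$ after the second because $\abs{P_n}$ is odd — gives
\begin{align*}
  \G(\RunVec{B_{n+1}},1) &= \G(\RunVec{B_n},1)\concat\G(\RunVec{P_n},3)\concat\G(\RunVec{B_n},1) \\
  &= B_{n+1}\concat P_{n+1}\concat B_{n+1} = B_{n+2},
\end{align*}
using the induction hypothesis and the definition $P_{n+1} = \G(\RunVec{P_n},3)$.

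For part~(i), induct on~$n$, the base case $B_1 = \Kol[1..11]$ being checked directly against the first eleven terms of $\Kol$. Assuming $B_n = \Kol[1..L_n]$, the relation $\Kol = \G(\RunVec{\Kol},1)$ says that the $j$-th run of $\Kol$ has length $\Kol[j]$ with symbols alternating $1,3,1,\dots$; for $j\le L_n$ that length is $(B_n)_j$, so the concatenation of the first $L_n$ runs of $\Kol$ is precisely $\G(\RunVec{B_n},1)$, which equals $B_{n+1}$ by part~(ii). Hence $B_{n+1}$ is a prefix of $\Kol$ of length $L_{n+1} = \abs{B_{n+1}}$, closing the induction. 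The limit statement then follows because each $B_n$ is a prefix of $B_{n+1} = B_n\concat P_n\concat B_n$, all $B_n$ are prefixes of $\Kol$, and $\abs{B_{n+1}}\ge 2\abs{B_n}\to\infty$, so the nested prefixes converge to the unique infinite word $\Kol$.

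The only step with genuine content is the parity bookkeeping inside the inductive step of~(ii): the alternating-symbol clock driving $\G$ must return to~$1$ at each seam of the triple concatenation $\RunVec{B_n}\concat\RunVec{P_n}\concat\RunVec{B_n}$, and this is exactly what forces $\abs{B_n}$ and $\abs{P_n}$ to be odd — the combination of \Cref{lem:basic-properties}(i) with the observation that $L_n$ is odd. Everything else is either a finite base-case verification or the routine homomorphism property of~$\G$, so I do not anticipate any hidden difficulty.
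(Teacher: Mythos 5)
Your proposal is correct and follows essentially the same route as the paper: the substance of both arguments is the identity $\G(\RunVec{B_n\concat P_n\concat B_n},1)=\G(\RunVec{B_n},1)\concat\G(\RunVec{P_n},3)\concat\G(\RunVec{B_n},1)$, justified by the oddness of $\abs{B_n}$ and $\abs{P_n}$ at the two seams, together with the same finite base-case check that $\G(\RunVec{B_1},1)=B_1\concat\seq{3}\concat B_1$. The only difference is organizational and works in your favour: you prove (ii) as a standalone induction and then derive (i) from it via the self-reference $\Kol=\G(\RunVec{\Kol},1)$, explicitly identifying $B_{n+1}$ with the concatenation of the first $L_n$ runs of $\Kol$ --- a step the paper compresses into the informal assertion that applying $\G$ to a correct prefix's run-length vector ``yields the next longer correct prefix according to the Kolakoski rule.''
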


\begin{proof}
We proceed by induction on $n$.

\smallskip
\noindent\textit{Base case ($n=1$).} Statement (i) holds for $n=1$ because $B_1 = \G(\RunVec{\Kol[1..5]}, 1)$ (\Cref{def:B-P}), which matches the definition of $\Kol$ applied to its initial run-length sequence.
For (ii), we must verify that $\G(\RunVec{B_1}, 1) = B_2$. Recall $B_1 = 1\,3\,3\,3\,1\,1\,1\,3\,3\,3\,1$. Interpreting $B_1$ as a run-length vector $\RunVec{B_1} = \seq{1, 3, 3, 3, 1, 1, 1, 3, 3, 3, 1}$, we apply the operator $\G(\cdot, 1)$:
\begin{itemize}[nosep, leftmargin=*]
    \item Run 1 (len 1, sym 1): $1$
    \item Run 2 (len 3, sym 3): $333$
    \item Run 3 (len 3, sym 1): $111$
    \item Run 4 (len 3, sym 3): $333$
    \item Run 5 (len 1, sym 1): $1$
    \item Run 6 (len 1, sym 3): $3$
    \item Run 7 (len 1, sym 1): $1$
    \item Run 8 (len 3, sym 3): $333$
    \item Run 9 (len 3, sym 1): $111$
    \item Run 10 (len 3, sym 3): $333$
    \item Run 11 (len 1, sym 1): $1$
\end{itemize}
Concatenating these runs yields the sequence:
\[ \underbrace{1\,333\,111\,333\,1}_{B_1} \,\, \underbrace{3}_{P_1} \,\, \underbrace{1\,333\,111\,333\,1}_{B_1} \]
This is precisely $B_1 \concat P_1 \concat B_1$. By \Cref{def:B-P}, $B_1 \concat P_1 \concat B_1 = B_2$. Thus, $\G(\RunVec{B_1}, 1) = B_2$, and statement (ii) holds for $n=1$.

\smallskip
\noindent\textit{Inductive step ($n=k \ge 1$).} Assume (i) and (ii) hold for $n=k$.

\medskip\noindent
\textit{Proof of (i) for $n=k+1$.} We need to show $B_{k+1}$ is the prefix $\Kol[1..\abs{B_{k+1}}]$.
By inductive hypothesis (ii) for $n=k$, $B_{k+1} = \G(\RunVec{B_k}, 1)$.
By inductive hypothesis (i) for $n=k$, $B_k = \Kol[1..\abs{B_k}]$.
Since $\Kol$ is defined as the sequence starting with '1' that equals its own run-length encoding, applying the generation operator $\G(\cdot, 1)$ to the run-length vector derived from a correct prefix ($\RunVec{B_k}$) yields the next longer correct prefix according to the Kolakoski rule.
Therefore, $B_{k+1} = \G(\RunVec{B_k}, 1)$ must be equal to $\Kol[1..\abs{B_{k+1}}]$.

\medskip\noindent
\textit{Proof of (ii) for $n=k+1$.} We need to show $\G(\RunVec{B_{k+1}}, 1) = B_{k+2}$.
Substitute the definition of $B_{k+1}$ from \Cref{def:B-P}:
\[
  \G(\RunVec{B_{k+1}},1) \;=\; \G\bigl(\RunVec{B_k\concat P_k\concat B_k}, 1\bigr).
\]
Interpreting $B_k\concat P_k\concat B_k$ as a run-length vector means applying $\G$ sequentially to the run lengths specified by $\RunVec{B_k}$, then $\RunVec{P_k}$, then $\RunVec{B_k}$. We verify the symbol alternation across the concatenation boundaries. The generation process involves $\abs{B_k} + \abs{P_k} + \abs{B_k}$ runs in total. Let $L_k = \abs{B_k}$ and $m_k = \abs{P_k}$.
\begin{itemize}
    \item \textbf{Segment 1 (Runs 1 to $L_k$):} Generated using $\RunVec{B_k}$ starting with symbol 1. This segment is $\G(\RunVec{B_k}, 1)$. By the inductive hypothesis (ii) for $n=k$, this equals $B_{k+1}$. By \Cref{lem:basic-properties}(ii), $B_{k+1}$ ends with symbol 1.
    \item \textbf{Segment 2 (Runs $L_k+1$ to $L_k+m_k$):} Generated using $\RunVec{P_k}$. The previous segment ended with symbol 1. The number of runs in Segment 1 is $L_k$. As established in the Remark after \Cref{lem:basic-properties}, $L_k$ is odd for $k \ge 1$. The first run of this segment (run $L_k+1$, which is an even index overall) must use the alternate symbol to the global start '1', which is '3'. This segment is thus $\G(\RunVec{P_k}, 3)$. By \Cref{def:B-P}, this equals $P_{k+1}$. By \Cref{lem:basic-properties}(iii), $P_{k+1}$ ends with symbol 3.
    \item \textbf{Segment 3 (Runs $L_k+m_k+1$ to end):} Generated using the second instance of $\RunVec{B_k}$. The previous segment ($P_{k+1}$) ended with symbol 3. The number of runs preceding this segment is $L_k+m_k$. By \Cref{lem:basic-properties}(i, ii), $L_k$ is odd and $m_k$ is odd, so their sum is even. The first run of this segment (run $L_k+m_k+1$, which is an odd index overall) must use the same symbol as the global start '1', which is '1'. This segment is thus $\G(\RunVec{B_k}, 1)$. By the inductive hypothesis (ii) for $n=k$, this equals $B_{k+1}$.
\end{itemize}
Therefore, the generation aligns precisely with the segments:
\begin{align*} \G(\RunVec{B_{k+1}},1) &= \G(\RunVec{B_k\concat P_k\concat B_k}, 1) \\ &= \G(\RunVec{B_k},1) \concat \G(\RunVec{P_k},3) \concat \G(\RunVec{B_k},1) \\ &= B_{k+1} \concat P_{k+1} \concat B_{k+1} \quad \text{(by IH(ii) and definition of } P_{k+1}) \\ &= B_{k+2} \quad \text{(by definition (\Cref{def:B-P}) of } B_{k+2}) \end{align*}
This proves statement (ii) for $n=k+1$.
\end{proof}

\section{Quantitative Analysis}
\label{sec:quant_analysis}

The recursive structure allows us to analyse the growth of the prefixes $B_n$ and the asymptotic frequency of symbols. Let $L_n = |B_n|$ and $m_n = |P_n|$ be the lengths of the blocks and pillars, respectively. Let $c_n = \Nx{1}{B_n}$ be the count of symbol '1' in $B_n$, and $o_n = \Nx{1}{P_n}$ be the count of symbol '1' in $P_n$.

\subsection{Recurrences for Lengths and Counts}
\label{sec:recurrences}
From the definitions $B_{n+1} = B_n \concat P_n \concat B_n$ and $P_{n+1} = \G(\RunVec{P_n}, 3)$, we obtain recurrences for lengths and counts:

\begin{itemize}
    \item \textbf{Block Length:} $L_{n+1} = |B_n| + |P_n| + |B_n| = 2L_n + m_n$.
    \item \textbf{Pillar Length:} $m_{n+1} = |P_{n+1}| = |\G(\RunVec{P_n}, 3)|$. The length is the sum of run lengths specified by $\RunVec{P_n}$, which are the elements of the sequence $P_n$:
       $m_{n+1} = \sum_{i=1}^{m_n} (\RunVec{P_n})_i = \sum_{i=1}^{m_n} (P_n)_i$. This sum equals:
       $m_{n+1} = \Nx{1}{P_n} \cdot 1 + \Nx{3}{P_n} \cdot 3 = o_n \cdot 1 + (m_n - o_n) \cdot 3 = 3m_n - 2o_n$.
    \item \textbf{Block '1' Count:} $c_{n+1} = \Nx{1}{B_{n+1}} = \Nx{1}{B_n} + \Nx{1}{P_n} + \Nx{1}{B_n} = 2c_n + o_n$.
    \item \textbf{Pillar '1' Count:} $P_{n+1} = \G(\RunVec{P_n}, 3) = 3^{(P_n)_1} \concat 1^{(P_n)_2} \concat 3^{(P_n)_3} \concat \dots$. The symbol '1' is used for the runs corresponding to the even-indexed entries of the run-length vector $\RunVec{P_n}$ (which are the even-indexed elements of the sequence $P_n$). Thus, the count of '1's in $P_{n+1}$ is the sum of the lengths specified by these even-indexed elements:
       $o_{n+1} = \Nx{1}{P_{n+1}} = \sum_{\substack{1 \le i \le m_n \\ i \text{ is even}}} (\RunVec{P_n})_i = \sum_{\substack{1 \le i \le m_n \\ i \text{ is even}}} (P_n)_i$.
\end{itemize}

\noindent \textbf{Initial Values (n=1):}
$B_1 = 1\,333\,111\,333\,1 \implies L_1 = 11, c_1 = 5$.
$P_1 = \seq{3} \implies m_1 = 1, o_1 = 0$.

\subsection{Fundamental Identity}
\Cref{thm:main}(ii) states $B_{n+1} = \G(\RunVec{B_n}, 1)$. This provides an alternative way to calculate the length $L_{n+1}$:
\[
L_{n+1} = |\G(\RunVec{B_n}, 1)| = \sum_{i=1}^{L_n} (\RunVec{B_n})_i = \sum_{i=1}^{L_n} (B_n)_i = \Nx{1}{B_n} \cdot 1 + \Nx{3}{B_n} \cdot 3.
\]
Substituting $\Nx{3}{B_n} = L_n - c_n$:
\[
L_{n+1} = c_n + 3(L_n - c_n) = 3L_n - 2c_n.
\]
Equating the two expressions for $L_{n+1}$ (from \Cref{sec:recurrences} and this one):
\[
2L_n + m_n = 3L_n - 2c_n
\]
This yields a fundamental relationship between the lengths and the count of 1s:
\begin{proposition}[Fundamental Identity]\label{prop:identity}
  For all $n \ge 1$, $m_n = L_n - 2c_n$. Equivalently, $m_n = \Nx{3}{B_n} - \Nx{1}{B_n}$.
\end{proposition}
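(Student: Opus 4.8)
The proof of Proposition~\ref{prop:identity} is essentially already laid out in the paragraph preceding its statement, so the plan is simply to organize that computation cleanly. The key observation is that there are two independent ways to compute $L_{n+1} = |B_{n+1}|$, and equating them yields the identity.

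First, I would invoke the structural recurrence from Definition~\ref{def:B-P}: since $B_{n+1} = B_n \concat P_n \concat B_n$, taking lengths gives $L_{n+1} = 2L_n + m_n$. This step is immediate. Second, I would invoke Theorem~\ref{thm:main}(ii), which states $B_{n+1} = \G(\RunVec{B_n}, 1)$. By Definition~\ref{def:gen_op}, the length of a generated sequence is the sum of its run-length vector entries, so
\[
L_{n+1} = |\G(\RunVec{B_n},1)| = \sum_{i=1}^{L_n} (B_n)_i.
\]
Since $B_n$ consists only of the symbols $1$ and $3$, this sum equals $\Nx{1}{B_n}\cdot 1 + \Nx{3}{B_n}\cdot 3 = c_n + 3(L_n - c_n) = 3L_n - 2c_n$, using $\Nx{3}{B_n} = L_n - c_n$.

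Equating the two expressions, $2L_n + m_n = 3L_n - 2c_n$, and solving for $m_n$ gives $m_n = L_n - 2c_n$. The equivalent form follows by substituting $L_n = c_n + \Nx{3}{B_n}$ (again using that $B_n$ is binary over $\{1,3\}$): then $L_n - 2c_n = \Nx{3}{B_n} - c_n = \Nx{3}{B_n} - \Nx{1}{B_n}$.

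There is no genuine obstacle here; the entire content is bookkeeping. The only point requiring care is the legitimate use of Theorem~\ref{thm:main}(ii): one must be sure the computation of $L_{n+1}$ via the generation operator is valid for all $n \ge 1$, which it is, since Theorem~\ref{thm:main}(ii) is established for all $n \ge 1$. I would also note explicitly that the two length formulas for $L_{n+1}$ are both valid simultaneously precisely because Theorem~\ref{thm:main} guarantees the recursive construction is consistent with the Kolakoski property — this is really the conceptual reason the identity holds, even though the derivation itself is elementary.
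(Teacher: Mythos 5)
Your proposal is correct and follows essentially the same route as the paper: both compute $L_{n+1}$ once via the concatenation recurrence ($2L_n + m_n$) and once via Theorem~\ref{thm:main}(ii) and the generation operator ($3L_n - 2c_n$), equate the two, and derive the second form from $L_n = c_n + \Nx{3}{B_n}$. No gaps.
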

\begin{proof}
The derivation above holds for all $n \ge 1$ since \Cref{thm:main}(ii) holds for all $n \ge 1$. We can verify this for $n=1$: $m_1 = 1$ and $L_1 - 2c_1 = 11 - 2(5) = 1$. The second form follows since $L_n = \Nx{1}{B_n} + \Nx{3}{B_n} = c_n + \Nx{3}{B_n}$, so $L_n - 2c_n = (c_n + \Nx{3}{B_n}) - 2c_n = \Nx{3}{B_n} - c_n = \Nx{3}{B_n} - \Nx{1}{B_n}$.
\end{proof}
This identity connects the pillar length directly to the composition of the corresponding block, showing it equals the excess of 3s over 1s in the block.

\subsection{Exponential Growth}
\begin{proposition}\label{prop:growth}
 The block length $L_n = |B_n|$ grows exponentially. The pillar length $m_n = |P_n|$ also grows exponentially.
\end{proposition}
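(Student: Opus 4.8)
The plan is to bracket both $L_n$ and $m_n$ between two exponential functions, using only the elementary recurrences of \Cref{sec:recurrences}; no spectral input is needed for the qualitative statement. The two upper bounds are immediate: from $m_{n+1}=3m_n-2o_n\le 3m_n$ and $m_1=1$ we get $m_n\le 3^{n-1}$, and substituting into $L_{n+1}=2L_n+m_n$ and unrolling yields $L_n=2^{n-1}L_1+\sum_{k=1}^{n-1}2^{n-1-k}m_k\le 11\cdot 2^{n-1}+\sum_{k=1}^{n-1}2^{n-1-k}3^{k-1}=O(3^n)$. A matching exponential lower bound for $L_n$ is equally easy: since $m_n\ge 1>0$, the recurrence forces $L_{n+1}\ge 2L_n$, hence $L_n\ge 11\cdot 2^{n-1}$. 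So $L_n$ grows exponentially outright, and the only real work is a lower bound for $m_n$.

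For that, I would prove the Fibonacci-type inequality $m_{n+1}\ge m_n+m_{n-1}$ for all $n\ge 2$. First, $m_{n+1}=\abs{\G(\RunVec{P_n},3)}=\sum_{i=1}^{m_n}(P_n)_i=\Nx{1}{P_n}+3\,\Nx{3}{P_n}=m_n+2\,\Nx{3}{P_n}$, so it suffices to show $\Nx{3}{P_n}\ge \tfrac12 m_{n-1}$. Now write $P_n=\G(\RunVec{P_{n-1}},3)$ (valid for $n\ge 2$): by \Cref{def:gen_op} the runs of $P_n$ are exactly the blocks prescribed by $\RunVec{P_{n-1}}$ — they never merge, since adjacent blocks carry different symbols and all entries of $P_{n-1}$ are positive — and the blocks carrying symbol $3$ are precisely the odd-indexed ones, with lengths equal to the odd-indexed entries $(P_{n-1})_1,(P_{n-1})_3,\dots$ of $P_{n-1}$. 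There are $\lceil m_{n-1}/2\rceil$ such entries, each $\ge 1$, so $\Nx{3}{P_n}\ge\lceil m_{n-1}/2\rceil\ge \tfrac12 m_{n-1}$, giving $m_{n+1}\ge m_n+m_{n-1}$ as claimed. (Using the oddness of $m_{n-1}$ from \Cref{lem:basic-properties}(i) one even gets the strict $m_{n+1}\ge m_n+m_{n-1}+1$, but this is not needed.) With $m_1=1$ and $m_2=3$, a one-line induction against the Fibonacci numbers $F_n$ ($F_1=F_2=1$) gives $m_n\ge F_n\ge\varphi^{\,n-2}$, where $\varphi=(1+\sqrt5)/2$; hence $m_n$ grows exponentially. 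Combining this with $m_n\le 3^{n-1}$ settles the pillar claim, and then $L_{n+1}=2L_n+m_n$ re-confirms that $L_n$ too lies between exponentials.

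I expect the lower bound on $m_n$ to be the only delicate point. The naive estimate $o_n\le m_n$ yields merely $m_{n+1}\ge m_n+2$ (linear growth), and the tempting bound $o_n\le\tfrac12 m_n$ is in fact false for large $n$ (the pillars converge to $\operatorname{Kol}(3,1)$, whose density of the symbol $3$ is below $1/2$); the remedy is to count $3$-\emph{runs} of $P_n$ rather than $3$-symbols, which is exactly the odd-index argument above. These crude bounds only locate the common growth rate somewhere in $(\varphi,3)$; identifying it with the Pisot number $\alpha\approx 2.20557$, the real root of $x^3-2x^2-1$, requires the finer linear-algebraic analysis of the coupled system $(L_n,m_n,c_n,o_n,\dots)$ and its characteristic polynomial, which goes beyond what is needed for the present statement.
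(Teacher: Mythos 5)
Your proof is correct, and for the pillar lengths it takes a genuinely different route from the paper. The paper's treatment of $L_n$ is the same as yours ($m_n\ge 1$ forces $L_{n+1}\ge 2L_n+1$, hence $L_n\ge 11\cdot 2^{n-1}$), but for $m_n$ the paper writes $m_{n+1}=m_n(3-2\delta_n)$ and invokes the convergence $\delta_n\to d<1/2$ — a fact it only justifies by forward reference to \Cref{sec:density_convergence} and ultimately by appeal to the external substitution-dynamics results of Baake and Sing — to conclude $m_{n+1}>c\,m_n$ with $c>2$ for large $n$. Your argument instead stays entirely inside the combinatorics: from $m_{n+1}=m_n+2\,\Nx{3}{P_n}$ you lower-bound $\Nx{3}{P_n}$ by counting the odd-indexed (symbol-$3$) runs of $P_n=\G(\RunVec{P_{n-1}},3)$, each of length at least $1$, getting $\Nx{3}{P_n}\ge\lceil m_{n-1}/2\rceil$ and hence the Fibonacci-type bound $m_{n+1}\ge m_n+m_{n-1}$, so $m_n\ge F_n\ge\varphi^{\,n-2}$. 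This is self-contained and makes the proposition logically independent of the density limit, which is a genuine improvement in rigour over the paper's version, at the cost of a weaker explicit rate ($\varphi$ rather than something exceeding $2$); both arguments stop short of the true rate $\alpha$. One small correction to your closing aside: the bound $o_n\le\tfrac12 m_n$ is not false for large $n$ — $o_n=\Nx{1}{P_n}$ and the pillar density of the symbol $1$ tends to $d\approx 0.397<1/2$ (it is the density of $3$ that exceeds $1/2$), so $o_n\le\tfrac12 m_n$ holds at least eventually and would give $m_{n+1}\ge 2m_n$; the real obstacle is that proving $\delta_n\le 1/2$ uniformly requires its own argument, which is exactly what your run-counting sidesteps. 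This slip is confined to the motivation and does not affect the proof.
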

\begin{proof}
For the block length $L_n = |B_n|$, the recurrence is $L_{n+1} = 2L_n + m_n$. Since $P_n$ contains only symbols $1$ and $3$, its length $m_n = |P_n| \ge 1$ for all $n \ge 1$. Thus, $L_{n+1} \ge 2L_n + 1$. With $L_1 = 11$, it follows that $L_n \ge 11 \cdot 2^{n-1}$, establishing that $L_n$ grows at least exponentially. (The precise rate $\alpha > 2$ is determined in \Cref{sec:growth_rate}).

For the pillar length $m_n = |P_n|$, the recurrence is $m_{n+1} = 3m_n - 2o_n$, where $o_n = \Nx{1}{P_n}$. Let $\delta_n = o_n/m_n$ be the density of '1's in $P_n$. As established in \Cref{sec:density_convergence} (based on the known properties of $K(1,3)$~\cite{BaakeSing2004}), $\lim_{n\to\infty} \delta_n = d$, where $d = (3-\alpha)/2 \approx 0.397215$.
Since $d < 1/2$, we can choose a constant $d'$ such that $d < d' < 1/2$. Because $\delta_n \to d$, there exists an integer $N$ such that for all $n \ge N$, we have $\delta_n < d'$.
For $n \ge N$, we substitute this bound into the recurrence:
\[
    m_{n+1} = 3m_n - 2o_n = m_n (3 - 2\delta_n) > m_n (3 - 2d').
\]
Let $c = 3 - 2d'$. Since $d' < 1/2$, we have $c = 3 - 2d' > 3 - 2(1/2) = 2$.
Therefore, for all $n \ge N$, $m_{n+1} > c m_n$ with $c > 2$. This proves that $m_n$ grows exponentially for $n \ge N$. Since $m_n \ge 1$ for all $n$, the sequence $m_n$ grows exponentially overall. The first few terms are $m_1=1, m_2=3, m_3=9, m_4=21, m_5=47, \dots$.
\end{proof}

\subsection{Symbol Density Recurrence}
Let $d_n = c_n / L_n$ be the density of '1's in block $B_n$. Let $\delta_n = o_n / m_n$ be the density of '1's in pillar $P_n$.
Divide the recurrence $c_{n+1} = 2c_n + o_n$ by $L_{n+1} = 2L_n + m_n$:
\[
d_{n+1} = \frac{c_{n+1}}{L_{n+1}} = \frac{2c_n + o_n}{2L_n + m_n}
\]
Divide numerator and denominator by $L_n$:
\[
d_{n+1} = \frac{2(c_n/L_n) + (o_n/L_n)}{2 + (m_n/L_n)} = \frac{2d_n + (o_n/m_n)(m_n/L_n)}{2 + m_n/L_n}
\]
Now use the identity $m_n = L_n - 2c_n$ from \Cref{prop:identity}. Dividing by $L_n$ gives $m_n/L_n = 1 - 2(c_n/L_n)$, or:
\[
\lambda_n := \frac{m_n}{L_n} = 1 - 2d_n
\]
Substitute $\delta_n = o_n/m_n$ and $\lambda_n = m_n/L_n$ into the expression for $d_{n+1}$:
\begin{equation}\label{eq:density_recurrence}
d_{n+1} = \frac{2d_n + \delta_n \lambda_n}{2 + \lambda_n} = \frac{2d_n + \delta_n (1 - 2d_n)}{2 + (1 - 2d_n)} = \frac{2d_n + \delta_n (1 - 2d_n)}{3 - 2d_n}
\end{equation}
This gives a recurrence relation between $d_{n+1}$, $d_n$, and the pillar density $\delta_n$.

\subsection{Convergence of Density}
\label{sec:density_convergence}
Consider the difference $d_{n+1} - d_n$:
\begin{align*}
 d_{n+1} - d_n &= \frac{2d_n + \delta_n (1 - 2d_n)}{3 - 2d_n} - d_n \\
 &= \frac{2d_n + \delta_n - 2d_n\delta_n - d_n(3 - 2d_n)}{3 - 2d_n} \\
 &= \frac{-d_n + \delta_n - 2d_n\delta_n + 2d_n^2}{3 - 2d_n} \\
 &= \frac{(\delta_n - d_n) - 2d_n(\delta_n - d_n)}{3 - 2d_n} \\
 &= \frac{(\delta_n - d_n)(1 - 2d_n)}{3 - 2d_n} \label{eq:density_diff} \tag{*}
\end{align*}
The sequence $d_n$ is bounded, as $0 \le c_n \le L_n$ implies $0 \le d_n \le 1$. The existence of the limit $d = \lim_{n\to\infty} d_n$ is guaranteed by the known morphic properties of the sequence \cite{BaakeSing2004}, which imply unique symbol frequencies.
The numerical calculation using the recurrences yields the following values:
\begin{itemize}[nosep]
    \item $n=1$: $L_1=11, c_1=5 \implies d_1 \approx 0.4545$. $m_1=1, o_1=0 \implies \delta_1 = 0$.
    \item $n=2$: $L_2=23, c_2=10 \implies d_2 \approx 0.4348$. $m_2=3, o_2=0 \implies \delta_2 = 0$.
    \item $n=3$: $L_3=49, c_3=20 \implies d_3 \approx 0.4082$. $m_3=9, o_3=3 \implies \delta_3 \approx 0.3333$.
    \item $n=4$: $L_4=107, c_4=43 \implies d_4 \approx 0.4019$. $m_4=21, o_4=8 \implies \delta_4 \approx 0.3810$.
    \item $n=5$: $L_5=235, c_5=94 \implies d_5 = 0.4000$. $m_5=47, o_5=18 \implies \delta_5 \approx 0.3830$.
    \item $n=6$: $L_6=517, c_6=206 \implies d_6 \approx 0.3985$. $m_6=105, o_6=41 \implies \delta_6 \approx 0.3905$.
\end{itemize}
The sequence $d_n$ appears to converge numerically towards the known limit $d \approx 0.397$.
From \eqref{eq:density_diff}, if $d_n \to d$, then $d_{n+1}-d_n \to 0$. Since $d \approx 0.397 \ne 1/2$, the factor $(1-2d_n)$ approaches $1-2d \ne 0$, and the denominator $3-2d_n$ approaches $3-2d \ne 0$. Therefore, for the difference to approach 0, we must have $\lim_{n\to\infty} (\delta_n - d_n) = 0$. This implies that if $d_n$ converges, $\delta_n$ must also converge, and to the same limit.

\begin{theorem}\label{thm:density_limit}
Assume the limit density $d = \lim_{n\to\infty} c_n/L_n$ of '1's in the prefixes $B_n$ exists (as is known for $K(1,3)$~\cite{BaakeSing2004}) and $d \ne 1/2$. Then the limit density $\delta = \lim_{n\to\infty} o_n/m_n$ of '1's in the pillars $P_n$ must also exist, and $\delta = d$.
\end{theorem}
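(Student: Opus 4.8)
The plan is to exploit the exact difference identity \eqref{eq:density_diff} derived just above, namely
\[
d_{n+1} - d_n = \frac{(\delta_n - d_n)(1 - 2d_n)}{3 - 2d_n},
\]
and solve it for $\delta_n - d_n$. First I would record two harmless facts: $\delta_n = o_n/m_n$ is well defined for every $n$ since $m_n \ge 1$ (the word $P_n$ is nonempty over $\{1,3\}$, as used in \Cref{prop:growth}), and $0 \le d_n \le 1$ forces $3 - 2d_n \in [1,3]$, so the denominator is always bounded and bounded away from $0$.

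Next I would invoke the hypothesis that $d = \lim_n d_n$ exists with $d \ne 1/2$: since $d_n \to d$ and $1 - 2d \ne 0$, there are an index $N$ and a constant $\eta > 0$ with $\lvert 1 - 2d_n\rvert \ge \eta$ for all $n \ge N$. For such $n$ the identity may be rearranged as
\[
\delta_n - d_n = \frac{(d_{n+1} - d_n)(3 - 2d_n)}{1 - 2d_n},
\]
which is legitimate because the denominator is nonzero. Now convergence of $d_n$ gives $d_{n+1} - d_n \to 0$, while $3 - 2d_n \to 3 - 2d$ and $(1-2d_n)^{-1} \to (1-2d)^{-1}$ are both bounded; hence the right-hand side tends to $0$. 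Therefore $\delta_n - d_n \to 0$, and consequently $\delta_n = d_n + (\delta_n - d_n) \to d + 0 = d$, so the limit $\delta = \lim_n o_n/m_n$ exists and equals $d$.

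I do not expect a genuine obstacle here: the whole argument is a one-line manipulation once \eqref{eq:density_diff} is available. The only points needing care are (a) guaranteeing that $1 - 2d_n$ does not vanish, which is why one passes to the tail $n \ge N$ — harmless for a statement about limits — and (b) checking that $3 - 2d_n$ and $(1-2d_n)^{-1}$ stay bounded, which is immediate from $d_n \in [0,1]$ and $d \ne 1/2$. It is worth emphasising in the write-up that the hypothesis $d \ne 1/2$ is genuinely used: were $d = 1/2$, the factor $1 - 2d_n$ would degenerate and \eqref{eq:density_diff} would carry no information about $\delta_n - d_n$, so the conclusion could fail in that regime — which, for $\Kol$, does not arise since $d \approx 0.397$.
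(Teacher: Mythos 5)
Your argument is correct and is essentially the paper's own proof: both rely on the difference identity \eqref{eq:density_diff}, note that $d_{n+1}-d_n\to 0$ while the factors $3-2d_n$ and $(1-2d_n)^{-1}$ stay bounded (using $d\ne 1/2$), and conclude $\delta_n - d_n \to 0$ and hence $\delta_n \to d$. Your write-up is somewhat more careful than the paper's, which merely asserts that ``for the difference to approach $0$ we must have $\delta_n - d_n \to 0$'' without explicitly isolating $\delta_n - d_n$ or bounding $\lvert 1-2d_n\rvert$ away from zero on a tail, but the underlying reasoning is identical.
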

\begin{proof}
The existence of $d$ is given~\cite{BaakeSing2004}. The argument above using \eqref{eq:density_diff} shows that $\lim_{n\to\infty} (\delta_n - d_n) = 0$. Since $\lim_{n\to\infty} d_n = d$ exists, it follows that $\lim_{n\to\infty} \delta_n$ must also exist and equal $d$.
\end{proof}

This result demonstrates the internal consistency of the block-pillar structure: the asymptotic density within the blocks must match that within the pillars. While this framework establishes the relationship $d=\delta$, it does not independently derive the specific value of $d$, which relies on the algebraic properties of the underlying substitution \cite{BaakeSing2004}.

\subsection{Asymptotic Growth Rate and Connection to Pisot Dynamics}
\label{sec:growth_rate}

The recursive structure allows the determination of the asymptotic growth rate of $L_n$. Let $X_n = \begin{pmatrix} L_n \\ c_n \end{pmatrix}$. The exact recurrences (\Cref{prop:identity} and \Cref{sec:recurrences}) can be written as:
\begin{align*}
    L_{n+1} &= 3L_n - 2c_n \\
    c_{n+1} &= 2c_n + o_n = 2c_n + \delta_n m_n = 2c_n + \delta_n(L_n - 2c_n) \\
             &= \delta_n L_n + (2 - 2\delta_n) c_n
\end{align*}
This defines a time-varying linear system $X_{n+1} = M_n X_n$, where the transition matrix is
\[
    M_n = \begin{pmatrix} 3 & -2 \\ \delta_n & 2-2\delta_n \end{pmatrix}.
\]
As established (\Cref{thm:density_limit} and using known results for $\Kol$~\cite{BaakeSing2004}), the density $\delta_n = o_n/m_n$ converges to the limit $d = (3-\alpha)/2$, where $\alpha$ is the dominant Pisot root of $x^3 - 2x^2 - 1 = 0$~\cite[p.~170]{BaakeSing2004}. Therefore, the matrix $M_n$ converges to the constant limit matrix $M$:
\[
    M = \lim_{n\to\infty} M_n = \begin{pmatrix} 3 & -2 \\ d & 2-2d \end{pmatrix}.
\]
Standard theorems on the asymptotic behavior of linear difference equations (such as the Poincaré-Perron theorem or its extensions to systems, e.g., \cite{Poincare1885, Perron1909}) state that for systems of the form $X_{n+1} = (M + B_n)X_n$ where $B_n = M_n - M \to 0$, the growth rate of the solution vector $X_n$ is governed by the eigenvalues of the limit matrix $M$, provided its eigenvalues have distinct moduli. 

We find the eigenvalues of $M$ by solving the characteristic equation $\det(M - \lambda I) = 0$:
\begin{align*}
    \det \begin{pmatrix} 3-\lambda & -2 \\ d & 2-2d-\lambda \end{pmatrix} &= (3-\lambda)(2-2d-\lambda) - (-2)(d) \\
    &= \lambda^2 - (3 + 2 - 2d)\lambda + (3(2-2d) + 2d) \\
    &= \lambda^2 - (5 - 2d)\lambda + (6 - 6d + 2d) \\
    &= \lambda^2 - (5 - 2d)\lambda + (6 - 4d) = 0.
\end{align*}
Substituting the value $d = (3-\alpha)/2$~\cite[p.~171, Eq.~(6)]{BaakeSing2004}:
\begin{itemize}[nosep]
    \item $5 - 2d = 5 - 2\left(\frac{3-\alpha}{2}\right) = 5 - (3-\alpha) = 2 + \alpha$
    \item $6 - 4d = 6 - 4\left(\frac{3-\alpha}{2}\right) = 6 - 2(3-\alpha) = 6 - 6 + 2\alpha = 2\alpha$
\end{itemize}
The characteristic equation becomes:
\[
    \lambda^2 - (2 + \alpha)\lambda + 2\alpha = 0.
\]
This equation factors as:
\[
    (\lambda - \alpha)(\lambda - 2) = 0.
\]
The eigenvalues of the limit matrix $M$ are $\lambda_1 = \alpha$ and $\lambda_2 = 2$.
Since $\alpha \approx 2.20557$~\cite{BaakeSing2004}, we have $|\alpha| > |2|$. The eigenvalues have distinct moduli, satisfying the condition for the asymptotic theorem. The dominant eigenvalue $\alpha$ dictates the growth rate.
Therefore, the asymptotic growth rate of the block lengths is:
\[
    \lim_{n\to\infty} \frac{L_{n+1}}{L_n} = \alpha.
\]
This result demonstrates that the combinatorial block-pillar structure defined purely through the sequence's self-generation property inherently captures the correct dominant Pisot growth factor $\alpha$. This factor $\alpha$ is the same Pisot root that governs the underlying substitution dynamics of the sequence (\cite[pp.~170--171]{BaakeSing2004}) and acts as the inflation multiplier for the associated geometric model set (\cite[Thm.~2]{BaakeSing2004}). Deriving $\alpha$ directly from our recursive framework provides strong internal consistency and validates this framework as a quantitative reflection of the sequence's structural self-similarity. This self-similarity, governed by the Pisot number $\alpha$, underpins the long-range quasicrystalline order of $K(1,3)$, which manifests mathematically as a pure point diffraction spectrum (\cite[Thm.~3]{BaakeSing2004}).

\section{Discussion and Conclusion}
\label{sec:discussion}

\subsection{Summary of Findings}

This paper has introduced and analysed an explicit recursive structure inherent in the Kolakoski sequence $K(1,3)$ over the alphabet $\{1, 3\}$. We defined sequences of blocks $B_n$ and pillars $P_n$ satisfying the mutual recursion $B_{n+1}=B_n \concat P_n \concat B_n$ and $P_{n+1}=\G(R(P_n),3)$, where $\G$ is the run-length generation operator. Crucially, we proved (\Cref{thm:main}) that these recursively defined blocks $B_n$ are precisely the prefixes of $K(1,3)$, and that they satisfy the Kolakoski property directly via $B_{n+1}=\G(R(B_n),1)$.

This block-pillar framework enabled a detailed quantitative analysis. We derived exact recurrence relations for the lengths ($L_n, m_n$) and symbol counts ($c_n, o_n$) of blocks and pillars, leading to the fundamental identity $m_n = L_n - 2c_n = \Nx{3}{B_n} - \Nx{1}{B_n}$ (\Cref{prop:identity}). Analysis of the symbol densities $d_n = c_n/L_n$ and $\delta_n = o_n/m_n$ revealed that if these densities converge to limits $d$ and $\delta$ respectively, and $d \ne 1/2$, then necessarily $d=\delta$ (\Cref{thm:density_limit}). Most significantly, by analyzing the linear system governing the evolution of $(L_n, c_n)$, we demonstrated that the asymptotic growth rate of the block lengths is $\lim_{n\to\infty} L_{n+1}/L_n = \alpha \approx 2.20557$, where $\alpha$ is the dominant Pisot root of $x^3 - 2x^2 - 1 = 0$ (\Cref{sec:growth_rate}).

\subsection{Connection to Substitution Dynamics and Model Sets}

The emergence of the Pisot number $\alpha$ from our purely combinatorial recursion links this framework directly to the known algebraic and geometric properties of $K(1,3)$ (or its mirror $\operatorname{Kol}(3,1)$), particularly as established by Baake and Sing \cite{BaakeSing2004}.

\begin{enumerate}
    \item \textbf{Growth Factor Validation:} Baake and Sing show that the substitution $\sigma$ related to $\operatorname{Kol}(3,1)$ has a characteristic polynomial $x^3 - 2x^2 - 1 = 0$~\cite[p.~170]{BaakeSing2004}. Our derivation of the growth factor $\alpha$ (the dominant root of this polynomial) from the linear system governing $(L_n, c_n)$ (\Cref{sec:growth_rate}) confirms that our recursive structure accurately captures the fundamental scaling behaviour inherent in the substitution dynamics.

    \item \textbf{Density Consistency:} The derived requirement $d=\delta$ (if limits exist) and the numerical convergence observed (\Cref{sec:density_convergence}) towards $d \approx 0.397$ are consistent with the known density of '1's, $d = \rho_1 = (3-\alpha)/2$, calculated from the substitution matrix's eigenvectors \cite[p.~171, Eq.~(6)]{BaakeSing2004}.

    \item \textbf{Structural Analogy:} The $B_n P_n B_n$ decomposition appears to be a combinatorial analogue of the hierarchical self-similarity generated by the underlying Pisot substitution $\sigma$. The recursive generation of blocks and pillars likely mirrors the way the substitution builds increasingly complex structures from simpler ones.

    \item \textbf{Combinatorial Path vs. Geometric Deformation:} Baake and Sing rigorously construct the integer sequence $\operatorname{Kol}(3,1)$ as a \emph{deformed model set}. They start with an ideal geometric model set $\Sigma \operatorname{Kol}(3,1)$ (a Meyer set with points in $\mathbb{Z}[\alpha]$), obtained via a cut-and-project scheme, and then apply an explicit deformation map $\varphi$ to obtain the final sequence with integer spacings \cite[Thms.~2, 3]{BaakeSing2004}. Our recursive framework achieves the same end result – the correctly ordered sequence $K(1,3)$ with integer positions – through a purely combinatorial mechanism. The generation operator $\G$, by directly enforcing the Kolakoski run-length constraints using integer lengths specified by previous terms, inherently produces the final sequence structure. This provides a direct combinatorial path that bypasses the need for intermediate non-integer geometric objects and the explicit deformation step, offering a different but consistent perspective on the sequence's generation.
\end{enumerate}

\subsection{Context within the Kolakoski Family}

The discovery of this explicit and analytically tractable recursive structure in $K(1,3)$ stands in sharp contrast to the behaviour of the classical Kolakoski sequence $K(1,2)$. Despite decades of study, $K(1,2)$ remains deeply enigmatic, lacking any known comparable recursive decomposition, and fundamental properties like the existence and value (presumed 1/2) of its symbol density are still unproven conjectures \cite{Dekking2017}.

Our findings lend support to the general observation that Kolakoski sequences $K(a,b)$ where $a$ and $b$ share the same parity tend to be significantly more structured than those with mixed parity. $K(1,3)$ serves as a prime example of this regularity. It raises the natural question of whether analogous block-pillar structures might exist for other same-parity sequences expected to possess structure, such as $K(a, a+2)$ for odd $a$ (e.g., $K(3,5)$). These sequences are also related to Pisot substitutions \cite[p.~171]{BaakeSing2004}, suggesting similar mechanisms might be at play. Investigating such cases is a promising direction for future research. Note that sequences like $K(2,4)$ are trivially $2 \times K(1,2)$ and thus inherit the complexity of the $K(1,2)$ sequence (e.g., \cite[p.~1]{Dekking2017}). 

\subsection{Conclusion and Future Work}

This work has revealed a novel recursive block-pillar structure governing the Kolakoski sequence $K(1,3)$. This structure is not only computationally effective but also analytically tractable, allowing for the derivation of exact recurrences and, crucially, the sequence's characteristic Pisot growth factor $\alpha$ directly from the combinatorial definition. This provides a new perspective on the sequence's generation, complementing existing approaches based on substitution dynamics and model sets, and highlights the profound structural regularity distinguishing $K(1,3)$ from its classical counterpart $K(1,2)$.

Several questions remain open:
\begin{itemize}
    \item What is the deeper structural reason or symmetry that dictates the specific $B_{n+1} = B_n P_n B_n$ form of the decomposition?
    \item Can the steps of the block-pillar recursion ($B_n \to B_{n+1}$, $P_n \to P_{n+1}$) be explicitly mapped onto the substitution $\sigma$ acting on its alphabet $\{A,B,C\}$~\cite{BaakeSing2004}, or perhaps related to the iterated function system (IFS) associated with the sequence?
    \item Does an analogous, verifiable block-pillar recursion exist for other structured Kolakoski sequences, such as $K(3,5)$ (cf. \cite[p.~171]{BaakeSing2004})? 
\end{itemize}

Answering these questions could further illuminate the mechanisms underlying order and complexity in this fascinating family of self-generating sequences.

\bigskip 
\noindent\textbf{Acknowledgements.}

\noindent
[Placeholder Acknowledgement Awaiting Permission]

\noindent
The author also gratefully acknowledges \texttt{u/LemonadeTsunami} for observing the recursive structure within $K(1,3)$ in the \href{https://www.reddit.com/r/math/comments/1k0vu58/repetetive_pattern_in_kolakoski_sequence_13/}{Reddit post}~\cite{LemonadeTsunamiReddit2023} titled \emph{“Repetitive pattern in Kolakoski sequence \{1,3\}”}. This insightful observation served as the catalyst for the present formal analysis.

\section*{Author's Note}
Many thanks to Professor [*awaiting permission for acknowledgment] for pointing out the critical error in the frequency in the previous version, I hope this has now been fully rectified.

I'm a second-year undergraduate in economics at the University of Bristol. Most of my research so far has been in information theory and statistics, this paper is a bit of a departure. I wanted to try something more combinatorial and structural, partly as a challenge to test myself, and partly out of curiosity about recursive sequences and their hidden symmetries.

This note was written independently, without formal supervision or institutional support. I used AI tools (predominantly Gemini Pro 2.5 and o4-mini-high) to help with constantly checking rigour and consistency, polishing proofs, and sharpening the presentation. All the mathematics, definitions, and results are my own, but the writing process benefited from the kind of iterative back-and-forth that these tools make possible.

Though I’m still learning, I’ve tried to keep the exposition precise, readable, and honest to the sequence's underlying logic. If you have thoughts, corrections, or suggestions (especially if you work on automatic sequences or symbolic dynamics) I’d be very grateful to hear them and full accreditation for any feedback implemented will be added in future versions. 

\bibliographystyle{plainurl} 
\bibliography{references} 

\end{document}